\documentclass[10pt]{amsart}
\input xy
\xyoption{all}
\UseComputerModernTips

\usepackage{array}
\usepackage{longtable}
\usepackage{url}
\usepackage{enumerate}
\usepackage{amsmath}
\usepackage{amssymb}
\usepackage{amsthm}
\usepackage{amsfonts}
 \hoffset-1 in
 \voffset-.6 in
 \textwidth=6.5 in

 \textheight=9 in
  \usepackage{ifthen}
 \newcommand{\mymarginpar}[1]{%
    \marginpar{\ifthenelse{\isodd{\arabic{page}}}{\flushleft
#1}{\flushright #1}}}

 \setlength{\fboxsep}{0pt}

 \renewcommand{\phi}{\varphi}

\allowdisplaybreaks[1]

\newcommand{\IC}{\mathbb{C}}

 \newcommand{\IN}{\mathbb{N}}

 \newcommand{\IT}{\mathbb{T}}
 \newcommand{\IZ}{\mathbb{Z}}

\newcommand{\CP}{\mathcal{P}}

\newcommand{\CT}{\mathcal{T}}

\newcommand{\CK}{\mathcal{K}}

\newcommand{\coker}{\text{coker}\,}
\newcommand{\image}{\text{Im}\,}
\newcommand{\supp}{\text{supp}\,}

 \theoremstyle{plain} 
 \newtheorem{Theorem}{Theorem}[section]
 \newtheorem{Lemma}[Theorem]{Lemma}
 \newtheorem{Proposition}[Theorem]{Proposition}

 \theoremstyle{definition} 
 \newtheorem{Definition}[Theorem]{Definition}
 \newtheorem{Remark}[Theorem]{Remark}
 \newtheorem{Example}[Theorem]{Example}

\begin{document}

\title {$K$-theory of $C^*$-algebras of directed graphs}

\author{Menassie Ephrem}
\address{Department of Mathematics and Statistics,
Coastal Carolina University, Conway, SC  29528-6054}
\email{menassie@coastal.edu}

\author{Jack Spielberg}
\address{Department of Mathematics and Statistics,
Arizona State University, Tempe, AZ  85287-1804}
\email{jack.spielberg@asu.edu}

\subjclass[2000]{Primary 46L05, 46L35, 46L55}

\keywords{directed graph, Cuntz-Krieger algebra, graph algebra}

\begin{abstract}
For a directed graph $E$, we compute the $K$-theory of the
$C^*$-algebra $C^*(E)$ from the Cuntz-Krieger generators and
relations.  First we compute the $K$-theory of the crossed product
$C^*(E)\times_\gamma\IT$, and then using duality and the
Pimsner-Voiculescu exact sequence we compute the $K$-theory of
$C^*(E)\otimes\CK \cong  (C^*(E)\times\IT)\times\IZ$.  The method
relies on the decomposition of $C^*(E)$ as an inductive limit of
Toeplitz graph $C^*$-algebras, indexed by the finite subgraphs of
$E$.  The proof and result require no special asssumptions about the
graph, and is given in graph-theoretic terms.  This can be helpful if
the graph is described by pictures rather than by a matrix.
\end{abstract}

\maketitle

\section{introduction}

Since the work of Bratteli in the early 1970's, graphs have been used
as a tool to study a large class of $C^*$-algebras.  Bratteli
classified AF algebras in terms of their diagrams, later called
\textit{Bratteli diagrams} (\cite{bra}).  The current use of directed
graphs in $C^*$-algebras goes back to the work of Cuntz and Krieger
in \cite{cunkri}.  In that work, they associated a $C^*$-algebra to a
finite irreducible 0-1 matrix.

Later, it was noticed that if $A = (a_{ij})$ is an $n \times n$
matrix of 0's and 1's, then $A$ may be viewed as the incidence matrix
of a graph.
It then became natural to view Cuntz-Krieger algebras as arising from
the graphs.  This approach of viewing Cuntz-Krieger algebras as
$C^*$-algebras associated to graphs made the construction more visual
and communicable.

In \cite{kumpasraeren}, Kumjian, Pask, Raeburn and Renault defined
the graph groupoid of a countable row-finite directed graph with no
sinks, and showed that the $C^*$-algebra of this groupoid coincided
with a universal $C^*$-algebra generated by partial isometries
satisfying relations naturally generalizing those given in
\cite{cunkri}.  Since that time, many people have worked on
generalizing these results to arbitrary directed graphs (and beyond
--- for a survey, see \cite{rae}).  In \cite{functorialapproach}, an
approach to the general case is given that results in a direct limit
decomposition of the $C^*$-algebra of a general graph, over the
directed set of its finite subgraphs.  This work motivates the
current paper.

Cuntz and Krieger computed the $K$-theory of their $C^*$-algebra
associated to an irreducible matrix, and showed that it is an
invariant of flow equivalence of the matrix.  Since then several
proofs have been given for the computation of the $K$-theory of the
$C^*$-algebra of a directed graph (\cite{dritom, exelac, kat, pasrae, raeszy, kirchmodels, szy, yi}).  Most of these gave the proof for a restricted class of graphs, e.g. row-finite and/or sourceless (or, in the case of \cite{szy}, for graphs having a finite vertex set).  Proofs of the general case occur in \cite{dritom, kat}.  In this paper we give a proof is simpler than \cite{kat} (that paper treats topological graphs), and does not rely on the row-finite case as does \cite{dritom}.  We follow the general strategy of \cite{pasrae}, first computing
the $K$-theory of the AF algebra $C^*(E)\times_\gamma\IT$, where
$\gamma$ is the gauge action.  We do this  by using the decomposition
of $C^*(E)$ as a direct limit.  Then we give a fairly simple account
of the algebra involved in using the Pimsner-Voiculescu exact
sequence to compute the $K$-theory of $C^*(E)$.  The formula we give for the $K$-theory of the stable AF core $C^*(E)\times_\gamma\IT$ is new, we believe, as is its proof.  (A different formula was given in \cite{pasrae} in the row-finite case.)  We emphasize that in our treatment, no restrictions of any kind are made about row-finiteness, sources and sinks, and cardinality of the graph (the results in \cite{functorialapproach} do not require countability of the vertex and edge sets).

The outline of the paper is as follows.  In section
\ref{preliminaries} we provide the basic definitions of graph
$C^*$-algebras.  In section 3 we compute the $K$-theory of
$C^*(E)\times_\gamma\IT$, and in section 4, that of $C^*(E)$.

The authors wish to thank the referee for his/her detailed and constructive suggestions.

\section{preliminaries}\label{preliminaries}

The paper \cite{functorialapproach} is a reference for the remarks in
this section.  (The survey \cite{rae} is excellent.  However, unlike
that survey, we follow the original convention for graph algebras:
the vertex at the tip of an arrow corresponds to the initial
projection of the partial isometry corresponding to that arrow.)  A
directed graph $E = (E^0, E^1, o, t)$ consists of sets $E^0$ of
\textit{vertices} and $E^1$ of \textit{edges}, and maps $o$, $t : E^1
\to E^0$ identifying the \textit{origin} and \textit{terminus} of an
edge (when an edge is pictured as an arrow between two vertices, the
terminus is the vertex to which it points).  A vertex $x$ is called a
\textit{sink} if $o^{-1}(x) = \emptyset$, a \textit{source} if
$t^{-1}(x) = \emptyset$, and \textit{non-singular} if $o^{-1}(v)$ is
a finite nonempty set.  A \textit{path} is a sequence $e_1e_2 \cdots
e_n$ of edges satisfying $t(e_i) = o(e_{i+1})$ for each $i = 1$,
$\ldots$, $n-1$.  For a path $\mu = e_1 e_2 \cdots e_n$ we define
$o(\mu) = o(e_1)$, $t(\mu) = t(e_n)$, and the \textit{length},
$\ell$, of $\mu$ by $\ell(\mu) = n$.  We regard vertices as paths of
length zero.  Let $E^j$ denote the set of paths of length $j$, and
put $E^* = \cup_{j=0}^\infty E^j$, the \textit{path space} of the
graph.  For $x$, $y \in E^0$, we let $x E^j$, $E^j y$, and $x E^j y$
denote the sets of paths of length $j$ with origin $x$, with terminus
$y$, or both, respectively.

Let $E$ be a directed graph.  A \textit{Cuntz-Krieger $E$-family}
consists of  mutually orthogonal projections $\{s_v : v \in E^0 \}$,
and partial isometries $\{ s_e : e \in E^1\}$, satisfying
\begin{enumerate}
\item $s_{t(e)} = s_e^* s_e$ for all $e \in E^1$.
\item $\sum_{e \in F} s_e s_e^* \le s_v$ for any $v \in E^0$ and
finite subset $F \subseteq v E^1$.
\item $\sum_{e \in v E^1} s_e s_e^* = s_v$ for each non-singular
vertex $v \in E^0$.
\end{enumerate}
The \textit{graph $C^*$-algebra} is the $C^*$-algebra generated by a
universal Cuntz-Krieger $E$-family.  For a path $\mu = e_1 \cdots
e_n$ we write $s_\mu = s_{e_1} \cdots s_{e_n}$.  One easily checks
from the relations that $s_\mu^* s_\mu = s_{t(\mu)}$, $s_\mu s_\mu^*
\le s_{o(\mu)}$ and that $s_\nu^* s_\mu = 0$ unless one of $\mu$,
$\nu$ extends the other.  In this case, e.g. if $\mu = \nu\alpha$, we
have $s_\nu^* s_\mu = s_\alpha$.  Therefore we find that
\[
C^*(E) = \overline{\text{span}} \{s_\mu s_\nu^* : \mu,\;\nu \in E^*
\text{ and } t(\mu) = t(\nu) \}.
\]

Our methods rely crucially on the $C^*$-subalgebras of $C^*(E)$
determined by subgraphs of $E$.  These are termed \textit{relative
Toeplitz graph algebras} in \cite{functorialapproach}, and we
describe them here.  Let $F$ be a subgraph of $E$; that is, $F^0
\subseteq E^0$, $F^1 \subseteq E^1$, and the origin and terminus maps
for $F$ are the restrictions of those for $E$.  We let $S_F$ denote
the set of vertices $v$ of $F$ such that
\begin{enumerate}
\item $v$ is non-singular as a vertex of $E$.
\item $xF^1 = xE^1$.
\end{enumerate}
The \textit{relative Toeplitz Cuntz-Krieger relations} for $F$ and
$S_F$ are the same as the Cuntz-Krieger relations for $F$ except that
(3) is imposed only at vertices in $S_F$.  The \textit{(relative)
Toeplitz graph algebra}, $\CT C^*(F)$, of $F$ is the $C^*$-algebra
universal for the relative Toeplitz Cuntz-Krieger relations.  It is
shown in \cite{functorialapproach} that $\CT C^*(F) \subseteq C^*(E)$
in the obvious way.  (We should indicate the dependence of the
Toeplitz algebra on the choice of subset $S_F \subseteq F^0$, as in
\cite{functorialapproach}; we omit it in this article.)

Given a directed graph $E$, let $\gamma : \IT \to
\textit{Aut}(C^*(E))$ be defined on the generators by $\gamma_z(s_e)
= z s_e$, $e \in E^1$.  (Since  $\{z s_e : e \in E^1\}$ is a
Cuntz-Krieger $E$-family, this does define an automorphism.)  Then we
see that $\gamma_z(s_\mu s_\nu^*) = z^{\ell(\mu) - \ell(\nu)} s_\mu
s_\nu^*$ for any $\mu$, $\nu \in E^*$.  $\gamma$ is called the
\textit{gauge action}, and $(C^*(E),\IT,\gamma)$ is a $C^*$-dynamical
system.  It is a standard fact (see, e.g., \cite{functorialapproach})
that the crossed product algebra $C^*(E) \times_\gamma \IT$ is AF.
In the next section we compute the $K$-theory of $C^*(E)
\times_\gamma \IT$.  In the last section, we use the
Pimsner-Voiculescu exact sequence to compute the $K$-theory of
$C^*(E) \otimes \CK \cong  (C^*(E) \times_\gamma \IT
\times_{\widehat\gamma})$, where $\widehat\gamma$ is the dual action.

For $n \in \IZ$ we let $\zeta_n : \IT \to \IT$ be the $n$th character
of $\IT$:  $\zeta_n(z) = z^n$.  We note some basic computations in
$C^*(E) \times_\gamma \IT$.
First, for $a \in C^*(E)$ we write $\zeta_n a$ for the element of
$C_c(\IT,C^*(E)) \subseteq C^*(E) \times_\gamma \IT$ given by
\[
(\zeta_n a)(z) = \zeta_n(z) a.
\]
Thus $\{ \zeta_n s_\mu s_\nu^* \}$
is a total set in $C^*(E) \times_\gamma \IT$.
We use $\cdot$ for multiplication in $C^*(E)\times_\gamma \IT$.
Thus, if $\mu$, $\nu$, $p$, $q \in E^*$, and $m$, $n \in \IZ$, then
\begin{align*}
\zeta_n s_\mu s_\nu^* \cdot \zeta_m s_p s_q^*
&= \delta_{n,m+\ell(q)-\ell(p)} \zeta_m s_\mu s_\nu^* s_p s_q^* \\
(\zeta_m s_p s_q^*)^*
&= \zeta_{m+\ell(q)-\ell(p)} s_q s_p^*.
\end{align*}
The dual action of $\IZ$ on $C^*(E) \times_\gamma \IT$ is generated
by $\widehat\gamma \in \text{Aut}\,(C^*(E) \times_\gamma \IT)$, where
\[
\widehat{\gamma}(\zeta_m s_p s_q^*)
= \zeta_{m+1} s_p s_q^*.
\]

\section{The $K$-theory of  $C^*(E) \times_\gamma \IT$}

Let $M$ be the incidence matrix of $E$.  Thus
$M:E^0\times E^0\to \IN \cup \{\infty\}$
is defined by requiring that $M(x,y)$ equal the cardinality of
$xE^1y$.  We let $S$ be the set of non-singular vertices of $E$:
$S=\{x\in E^0 : xE^1 \text{ is finite and nonempty } \}$.  For a
subgraph $F$ of $E$ we let $M_F$ denote the incidence matrix of $F$,
and we let $S_F=\{x\in S\cap F^0 : xF^1=xE^1\}$.

\begin{Definition}  We define two maps, $\alpha$ and $\beta$, as
follows.  Let $V=C_c(E^0\times\IZ,\IZ)$ and $W=C_c(S\times\IZ,\IZ)$.
Then $\alpha:V\to V$ is given by
\begin{align*}
(\alpha f)(x,n)&=f(x,n-1),\ f\in V,\\
\intertext{and $\beta:W\to V$ is given by}
(\beta f)(x,n)&=\sum_{y\in S}M(y,x)f(y,n),\ f\in W.\\
\intertext{Equivalently, we may write (for $x\in S$)}
\beta(\delta_{x,n})&=\sum_{e\in xE^1}\delta_{t(e),n}.\\
\end{align*}
\end{Definition}
Thus we may describe $\beta$ loosely by $\beta f=M^t f$.  Note that
$\alpha$ is an isomorphism of $V$, $\alpha(W)=W$, and
$\alpha\circ\beta=\beta\circ\alpha$.
We define $\Phi:V\to K_0(C^*(E)\times\IT)$ by $\Phi(\delta_{x,n}) =
[\zeta_n s_x]$ (this defines $\Phi$ on a basis for $V$, and we extend
to all of $V$ by linearity).  Let $I=(1-\alpha\beta)(W)$.

\begin{Proposition} $\ker(\Phi)=I$,
and $\Phi$ is onto.
\smallskip\noindent
(Thus $K_0(C^*(E)\times\IT)\cong V/I$.)
\end{Proposition}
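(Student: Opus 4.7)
The plan is to first verify the easy halves --- that $\Phi$ descends to $V/I$ and is surjective --- and then reduce the harder containment $\ker\Phi \subseteq I$ to the finite case via the inductive limit decomposition $C^*(E) = \varinjlim_F \CT C^*(F)$ from \cite{functorialapproach}, which gives $C^*(E) \times_\gamma \IT = \varinjlim_F \CT C^*(F) \times_\gamma \IT$.

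For well-definedness I would check directly that $\Phi((1-\alpha\beta)(\delta_{x,n})) = 0$ for $x \in S$. Using $s_x = \sum_{e \in xE^1} s_es_e^*$ (relation (3) at $x \in S$), one has $[\zeta_n s_x] = \sum_{e \in xE^1}[\zeta_n s_es_e^*]$. The partial isometry $v = \zeta_n s_e$ in $C^*(E) \times_\gamma \IT$ satisfies, by the multiplication formulas of Section 2, $v^*v = \zeta_n s_{t(e)}$ and $vv^* = \zeta_{n-1} s_e s_e^*$, giving $[\zeta_n s_es_e^*] = [\zeta_{n+1} s_{t(e)}]$ and the desired cancellation. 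For surjectivity, since $C^*(E) \times_\gamma \IT$ is AF, its $K_0$ is generated by classes of projections in the finite-dimensional stages of a Bratteli filtration; using the decomposition $\varinjlim_F \CT C^*(F) \times_\gamma \IT$ one may arrange these to be of the form $\zeta_n s_\mu s_\mu^*$ for paths $\mu \in E^*$. Iterating the partial isometry argument with $v = \zeta_n s_\mu$ (which still satisfies $v^*v = \zeta_n s_{t(\mu)}$ and $vv^* = \zeta_{n-\ell(\mu)} s_\mu s_\mu^*$) yields $[\zeta_n s_\mu s_\mu^*] = [\zeta_{n+\ell(\mu)} s_{t(\mu)}] = \Phi(\delta_{t(\mu), n+\ell(\mu)})$.

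For each finite subgraph $F \subseteq E$, define analogously $V_F$, $W_F$, $\alpha_F$, $\beta_F$, $I_F$, and $\Phi_F : V_F \to K_0(\CT C^*(F) \times_\gamma \IT)$. Subgraph inclusions induce compatible maps, so $V = \varinjlim_F V_F$, $I = \varinjlim_F I_F$, $V/I = \varinjlim_F V_F/I_F$, and $\Phi = \varinjlim_F \Phi_F$. Since $K_0$ commutes with inductive limits of $C^*$-algebras, and the arguments above (applied verbatim inside each $\CT C^*(F)$) show each $\Phi_F$ is well-defined and surjective, it suffices to prove $\ker\Phi_F \subseteq I_F$ for each finite $F$.

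The main obstacle is this finite-$F$ case. For finite $F$, $\CT C^*(F) \times_\gamma \IT$ is AF with an explicit Bratteli diagram that can be read off from the paths in $F$ together with the $\IZ$-grading coming from the dual action: the $K_0$ inductive limit is generated at each stage by classes $[\zeta_n s_x]$ for $x \in F^0$, $n \in \IZ$, and the only identifications imposed are those arising from $\zeta_n s_x = \sum_{e \in xF^1}\zeta_n s_es_e^*$, which holds exactly at $x \in S_F$ (since relation (3) is imposed in $\CT C^*(F)$ only at those vertices). Verifying that these are the \textit{only} $K$-theoretic relations --- in particular, that the generators $[\zeta_n s_x]$ for $x \in F^0 \setminus S_F$ remain linearly independent in $K_0(\CT C^*(F) \times_\gamma\IT)$, reflecting the absence of relation (3) at such vertices --- and carefully tracking the embeddings in the AF inductive system are the detailed computations that carry the proof.
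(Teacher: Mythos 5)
Your reduction to finite subgraphs and your verification of the easy containment $I\subseteq\ker\Phi$ are sound and match the paper's strategy (the paper likewise chooses a finite subgraph $F$ with $\sum_{x,i}f(x,i)[\zeta_i s_x]=0$ already in $K_0(\CT C^*(F)\times\IT)$, using continuity of $K_0$). But the proof has a genuine gap exactly where you locate ``the main obstacle'': you assert that in $K_0(\CT C^*(F)\times\IT)$ the only identifications among the generators $[\zeta_n s_x]$ are those coming from relation (3) at $x\in S_F$, and then say that verifying this ``carries the proof'' --- without verifying it. That claim \emph{is} the hard direction $\ker\Phi_F\subseteq I_F$; it cannot be read off from the presence or absence of relation (3), because a priori the Bratteli diagram of the crossed product could impose further relations in the limit. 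The paper fills this in by: (i) reducing via $\widehat\gamma_*=\alpha$ to $f$ supported in $\{i\ge0\}$; (ii) enlarging the graph at sources (adding an edge $\theta$ into each source, a full-corner argument) so that every $[\zeta_i s_x]$ occurring in $f$ equals $[s_\mu s_\mu^*]$ for some path $\mu\in F^ix$, which pushes the computation into the fixed-point algebra; (iii) invoking the classification (from \cite{kirchmodels}, Lemma 4.3) of the minimal projections of the finite-dimensional algebras $C_k(F,S_F)$ --- the path projections $s_\mu s_\mu^*$, $\ell(\mu)=k$, together with the defect projections $s_\mu\xi_ys_\mu^*$, $\xi_y=s_y-\sum_{e\in yF^1}s_es_e^*$, $y\notin S_F$ --- whose classes form a basis of $K_0(C_k(F,S_F))$; (iv) expanding $[\zeta_is_x]$ in that basis via powers of $M_F$ to extract the linear equations $\sum_{i=0}^j(A^{j-i}f_i)(y)=0$; and (v) back-substituting to produce explicitly $h\in W$ with $(1-\alpha\beta)h=f$. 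None of steps (ii)--(v) appears in your proposal, and step (ii) in particular shows that sources are a real issue you have not addressed.

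A smaller point: in your surjectivity argument, the minimal projections of the finite-dimensional stages are not all of the form $\zeta_n s_\mu s_\mu^*$; the defect projections $\zeta_n s_\mu\xi_ys_\mu^*$ also occur. Their classes are still integer combinations of the $[\zeta_m s_w]$ (since $[\zeta_j\xi_y]=[\zeta_j s_y]-\sum_{e\in yF^1}[\zeta_{j+1}s_{t(e)}]$), so surjectivity survives, but the step needs to be stated. Your index computation $[\zeta_n s_es_e^*]=[\zeta_{n+1}s_{t(e)}]$ is correct and agrees with the paper's.
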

\begin{proof} First we show the equality.
($\supseteq$):  Let $x\in S$.  For $e\in xE^1$ we have
\begin{flalign*}
&&(\zeta_n s_e^*)^* \cdot \zeta_n s_e^*&=\zeta_n s_es_e^*&\\
&&\zeta_n s_e^* \cdot (\zeta_n s_e^*)^*&=\zeta_{n+1} s_{t(e)},\\
&\text{and hence} &
\sum_{e\in xE^1}\bigl[\zeta_{n+1}s_{t(e)}\bigr]
&=\sum_{e\in xE^1}[\zeta_n s_es_e^*]
=[\zeta_n s_x].\\
&\text{Therefore} &
\Phi\circ(1-\alpha\beta)(\delta_{x,n})
&=[\zeta_n s_x]
-\sum_{e\in xE^1}\bigl[\zeta_{n+1}s_{t(e)}\bigr]
=0.
\end{flalign*}
($\subseteq$):
Let $f\in\ker\Phi$.  Note that $f\in I$ if and only if $\alpha(f)\in
I$.  Also $\Phi(f)=0$ if and only if
$\widehat{\gamma}\bigl(\Phi(f)\bigr)=0$, i.e. if and only if
$\Phi\bigl(\alpha(f)\bigr)=0$.  Thus we may assume that $f(x,i)=0$
whenever $i<0$.  We intend to use this simplification to push
$\Phi(f)$ into $K_0\bigl(C^*(E)^\gamma\bigr)$, since the AF structure
of the fixed-point algebra is easier to deal with than that of
$C^*(E)\times\IT$.  There is one more adjustment necessary for this.

Let $x\in E^0$, $i\ge0$ be such that $f(x,i)\not=0$.  Recall that
$[\zeta_i s_x]=[s_\mu s_\mu^*]$ for any path $\mu\in E^i x$ (for such
a path $\mu$, let $W = s_\mu^* \in C_c(\IT,C^*(E))$; then $W^*W =
s_\mu s_\mu^*$ and $WW^* = \zeta_i s_{t(\mu)}$).  However, if $E$ has
sources, there might not exist such a path.  To get around this
problem, consider a source $y\in E^0$.  Let $D$ be the graph with
$D^0=E^0\cup\{\omega\}$ and $D^1=E^1\cup\{\theta\}$, where
$\omega\not\in E^0$, $o(\theta)=\omega$, and $t(\omega)=y$.  Then
$C^*(E)$ is a full corner in $C^*(D)$, and hence the two algebras
have the same $K$-theory.  (This is easily seen by observing that
$C^*(D)=C^*(E)+s_\theta C^*(E)+C^*(E) s_\theta^*+\IC s_\omega$.)
Moreover, the same observation lets one deduce that $C^*(E)\times\IT$
is a full corner in $C^*(D)\times\IT$ as well.  Thus we may replace
$E$ by $D$ in our situation.  Iterating this process allows us to
assume that for any $(x,i)\in\supp(f)$ there is a path $\mu\in E^*$
such that $[\zeta_i s_x]=[s_\mu s_\mu^*]$.  (The removal of sources
and sinks has a long history in the literature of graph
$C^*$-algebras.  One may add an infinite path leading to a source
rather than just a few edges as we have done here.)

Next we choose a finite dimensional subalgebra of $C^*(E)^\gamma$ to
work in.  Let $F$ be a finite subgraph of $E$ with the following
properties:
\begin{enumerate}
\item $\supp(f)\subseteq F^0\times\IZ$.
\label{itema}
\item For all $(x,i)\in\supp(f)$ there is a path $\mu\in F^i x$.
\label{itemb}
\item $\supp(f)\cap(S\times\IZ)\subseteq S_F\times\IZ$.
\label{itemc}
\item $\sum_{x,i} f(x,i)[\zeta_i s_x]_{K_0\bigl((\CT
C^*(F))\times\IT\bigr)}=0$.
\label{itemd}
\end{enumerate}
(This is possible since
$C^*(E)=\underset{F}{\underrightarrow{\lim}}\,\CT C^*(F)$.)

It follows that the terms of $f$ can be realized within the
fixed-point algebra $(\CT C^*(F))^\gamma$.  Since $(\CT
C^*(F))^\gamma$ is a hereditary subalgebra in the ideal of $(\CT
C^*(F))\times\IT$ that it generates, we may work entirely in $(\CT
C^*(F))^\gamma$.

For $k>0$ we let $C_k(F,S_F)$ denote the finite dimensional
subalgebra of $(\CT C^*(F))^\gamma$ spanned by elements of the form
$s_\mu s_\nu^*$ for which $\ell(\mu)=\ell(\nu)\le k$ (these are
finite dimensional subalgebras whose union is dense in the crossed
product).  Let $k$ be so large that $f(x,i)=0$ whenever $i>k$, and so
that $\sum_{x,i} f(x,i)[\zeta_i s_x]_{K_0(C_k(F,S_F))}=0$.  The
subalgebra $C_k(F,S_F)$ was studied in \cite{kirchmodels}.  In Lemma
4.3 of that paper, all equivalence classes of minimal projections
were described.  We recall that description now.  For $x\in F^0$ put
$\xi_x = s_x-\sum_{e\in xF^1}s_es_e^*$, the \textit{defect
projection} at $x$.  (Note
that $\xi_x=0$ if and only if $x\in S_F$.)  For $y\in F^0$ and $0\le
j<k$ let $N_j(y)=\{s_\mu \xi_y s_\mu^* : \mu\in F^j y\}$, and let
$N_k(y)=\{s_\mu s_\mu^* : \mu\in F^k y\}$.  Then
\[
\bigcup\{N_j(y) : y\in F^0\setminus S_F,\ 0\le
j<k\}\;\cup\;\bigcup\{N_k(y) : y\in F^0\}
\]
is a family of pairwise orthogonal minimal projections in
$C_k(F,S_F)$ with sum 1.  Moreover, two such projections $p\in
N_j(y)$ and $q\in N_i(w)$ are equivalent if and only if $j=i$ and
$y=w$.   It follows (see, e.g., the last part of the proof of Lemma
4.3 of \cite{kirchmodels}) that for any path $\mu\in F^*$ with
$\ell(\mu)\le k$ we have
\[
s_\mu s_\mu^* = \sum_{j<k-\ell(\mu)}\;\sum_{\nu\in t(\mu)F^j}
s_{\mu\nu}\xi_{t(\nu)} s_{\mu\nu}^* + \sum_{\nu\in
t(\mu)F^{k-\ell(\mu)}} s_{\mu\nu} s_{\mu\nu}^*.
\]
Hence, replacing the sum on $\nu$ by the sum on $y = t(\nu)$,
\begin{align*}
[\zeta_{\ell(\mu)} s_{t(\mu)}] = [s_\mu s_\mu^*]
&=\sum_{j=0}^{k-\ell(\mu)-1} \; \sum_{y\in F^0} M_F^j
\bigl(t(\mu),y\bigr) [\zeta_{\ell(\mu)+j} \xi_y]
+ \sum_{y\in F^0} M_F^{k-\ell(\mu)}
\bigl(t(\mu),y\bigr) [\zeta_k s_y] \\
&= \sum_{j=\ell(\mu)}^{k-1} \; \sum_{y\in F^0}
M_F^{j-\ell(\mu)}\bigl(t(\mu),y\bigr) [\zeta_j \xi_y]
+ \sum_{y\in F^0} M_F^{k-\ell(\mu)}\bigl(t(\mu),y\bigr)
[\zeta_k s_y]
\end{align*}
Thus
\begin{align*}
0 &= \Phi(f) \\
&= \sum_{i=0}^k \sum_{x\in F^0} f(x,i)[\zeta_i s_x] \\
&= \sum_{i=0}^k \sum_{x\in F^0} f(x,i)
\left(\sum_{j=i}^{k-1} \sum_{y\in F^0} M_F^{j-i}(x,y)[\zeta_j \xi_y]
+ \sum_{y\in F^0} M_F^{k-i}(x,y) [\zeta_k s_y]\right) \\
&= \sum_{j=0}^{k-1} \sum_{y\in F^0}
\left(\sum_{x\in F^0} \sum_{i=0}^j
M_F^{j-i}(x,y)f(x,i)\right)[\zeta_j \xi_y]
+ \sum_{y\in F^0}\left(\sum_{x\in F^0} \sum_{i=0}^k
M_F^{k-i}(x,y)f(x,i)\right) [\zeta_k s_y].
\end{align*}
We pause in the proof to introduce some definitions.
\begin{Definition}
For $g \in V$ let $g_i\in C_c(E^0,\IZ)$ be defined by
$g_i(x)=g(x,i)$, and let $A=M_F^t$.
\end{Definition}
Now we may write:
\[
\sum_{x\in F^0}M_F^r(x,y)f(x,i)=(A^r f_i)(y).
\]
We then have
\begin{enumerate}\setcounter{enumi}{4}
\item $\sum_{i=0}^j (A^{j-i}f_i)(y)=0$, for all $y\in F^0\setminus
S_F$ and $0\le j<k$,
\label{iteme}
\item $\sum_{i=0}^k (A^{k-i}f_i)(y)=0$, for all $y\in F^0$.
\label{itemf}
\end{enumerate}
We pause the proof once more to introduce new notation.
\begin{Definition}\label{betazero}
Let $V_0=C_c(E^0,\IZ)$, $W_0=C_c(S,\IZ)$, and let  $\beta_0:W_0\to
V_0$  be defined by $\beta_0 = \beta \bigr|_{W_0}$.  (We may also describe $\beta_0$ by
$\beta_0(\delta_x) = \sum_{e \in xE^1} \delta_{t(e)}$,
$x \in S$ (compare \cite{kat}, Proposition 6.11.)
\end{Definition}
Then $\beta_0$ agrees with $A$ on $C_c(S_F,\IZ)$.
Using \eqref{iteme}, \eqref{itemf}, \eqref{itemc}, and \eqref{itema},
we find that
\begin{align*}
f_0&\in W_0,\\
f_1+Af_0&\in W_0,\\
\cdots&\\
f_{k-1}+A(f_{k-2}+A(\cdots+A(f_1+Af_0)))&\in W_0,\\
f_k+A(f_{k-1}+A(\cdots+Af_0))&=0.
\end{align*}
Thus $A$ can be replaced by $\beta_0$ in these formulas.  Let us
define $h\in W$ by
\[
h_i=
\begin{cases}
f_0,&\text{ if }i=0\\
f_i+\beta_0 h_{i-1},&\text{ if }0<i<k\\
0,&\text{ if }i<0\text{ or }i\ge k.
\end{cases}
\]
Then it is immediate that $(1-\alpha\beta)h=f$, proving that $f\in I$.

Finally, we show that $\Phi$ is onto.  We have already seen that the
classes of minimal projections in $C_k(F,S_F)$ are in the range of
$\Phi$.  Since the images of these under $\widehat{\gamma}_*=\alpha$
generate $K_0(C^*(E)\times\IT)$, it follows that $\Phi$ is onto.
\end{proof}

\section{The $K$-theory of $C^*(E)$}

The rest of our argument consists of algebraic manipulations.  We
first give some notation.

\begin{Definition}
Define maps $e_i:V\to V$ by
\[
e_i(f)_j=
\begin{cases}
f_i,&\text{if $j=i$} \\
0,&\text{if $j\not=i$}.
\end{cases}
\]
Let $q_i:V\to V$ be defined by $q_i=\sum_{j\le i}e_j$ (note that the
sum is finite on elements of $V$).  We note that $e_i$ and $q_i$
commute with $\beta$, and that $\alpha^j\circ
e_i=e_{i+j}\circ\alpha^j$ (and similarly for $q_j$).  We define
$E:V\to V_0$ by $E(f)=\sum_i f_i$, and $\phi:V_0\to V$ by
\[
\phi(x)_j=
\begin{cases}
x,&\text{if $j=0$} \\
0,&\text{if $j\not=0$}.
\end{cases}
\]
Then $E\circ\alpha=E$, $E\circ\beta=\beta_0\circ E$ and
$\phi\circ\beta_0=\beta\circ\phi$.
\end{Definition}

\begin{Lemma}\label{lemma_1}
Let $g\in V$ and $h\in W$ be such that
\begin{equation}
\label{eqn_1_a}
(1-\alpha^{-1})g=(1-\alpha\beta)h.
\end{equation}
Then $E(h)\in\ker(1-\beta_0)$, and
$g+\phi\circ E(h)\in I$.
\end{Lemma}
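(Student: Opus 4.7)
The plan is to read the hypothesis slice by slice and dispatch the two claims in turn. For the first, I would apply $E$ to both sides of \eqref{eqn_1_a}; the identities $E\circ\alpha=E$ and $E\circ\beta=\beta_0\circ E$ annihilate the left-hand side and reduce the right-hand side to $(1-\beta_0)E(h)$, which must therefore vanish.

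For the second, I would rewrite the hypothesis slicewise as
\[
g_j - g_{j+1} = h_j - \beta_0 h_{j-1} \qquad (j \in \IZ),
\]
where $g_j := g(\cdot,j) \in V_0$ and $h_j := h(\cdot,j) \in W_0$. Telescoping in the two possible directions, using that $g$ and $h$ are finitely supported, gives
\[
g_j \;=\; H_j - \beta_0 H_{j-1} \;=\; \beta_0 K_{j-1} - K_j,
\]
where $H_j := \sum_{i \ge j} h_i$ and $K_j := \sum_{i < j} h_i$. These two closed forms are consistent precisely because $H_j + K_j = E(h)$ and the first conclusion gives $(1-\beta_0)E(h)=0$.

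The witness for membership in $I$ is then obtained by splicing these expressions at the origin: define $u \in W$ by $u_j = H_j$ for $j \ge 0$ and $u_j = -K_j$ for $j \le -1$. Each slice lies in $W_0$, and $u_j = 0$ for $|j|$ sufficiently large (since $H_j$ vanishes for $j$ large positive and $K_j$ for $j$ large negative), so $u \in W$. A slicewise check gives $u_j - \beta_0 u_{j-1} = g_j$ for every $j \ne 0$, while at $j = 0$ the splice yields $u_0 - \beta_0 u_{-1} = H_0 + \beta_0 K_{-1}$, which collapses via $K_{-1} = E(h) - H_{-1}$ and $\beta_0 E(h) = E(h)$ to $g_0 + E(h)$. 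Hence $(1-\alpha\beta)u = g + \phi\circ E(h)$, so $g + \phi\circ E(h) \in I$. The main subtlety is that neither $H$ alone nor $-K$ alone lies in $W$ when $E(h) \ne 0$; it is precisely the first conclusion $(1-\beta_0)E(h)=0$ that lets the hybrid $u$ avoid spurious mismatches off $j = 0$, while the unavoidable mismatch at the splice is exactly what $\phi\circ E(h)$ absorbs.
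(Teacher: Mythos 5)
Your proof is correct; I checked the slicewise identity $g_j-g_{j+1}=h_j-\beta_0 h_{j-1}$, both telescopes, the membership $H_j,K_j\in W_0$ with vanishing for $|j|$ large, and the computation at the splice $j=0$, and all the steps go through. The underlying idea --- telescoping the slicewise recursion coming from \eqref{eqn_1_a} --- is the same as in the paper, but the execution differs in a worthwhile way. The paper sums the relations for $i\le j$ only (via the operators $q_j$), twists by $\alpha^{-j}$, and telescopes to conclude $g+\alpha^n\circ\phi\circ E(h)\in I$ for a large $n$ depending on the supports; it then needs a separate second step, using $E(h)\in\ker(1-\beta_0)$ and $\phi\circ\beta_0=\beta\circ\phi$, to show that $\alpha^j\circ\phi\circ E(h)$ and $\alpha^{j+1}\circ\phi\circ E(h)$ agree modulo $I$, so that the unwanted power of $\alpha$ can be stripped off. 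You instead telescope in both directions, obtaining the two closed forms $g_j=H_j-\beta_0H_{j-1}=\beta_0K_{j-1}-K_j$, and splice them at $j=0$ to produce an explicit element $u\in W$ with $(1-\alpha\beta)u=g+\phi\circ E(h)$ on the nose. This buys a self-contained witness for membership in $I$ and eliminates the paper's final reduction step; the first conclusion $(1-\beta_0)E(h)=0$ enters your argument only once, at the splice, whereas in the paper it is also what powers the shifting argument at the end. The trade-off is minor: the paper's $q_j$/$\alpha^{-j}$ bookkeeping is set up so that the same operators reappear in Lemma \ref{lemma_2}, whereas your construction is ad hoc to this lemma --- but as a proof of Lemma \ref{lemma_1} itself, yours is cleaner.
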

\begin{proof}
Applying $E$ to equation \eqref{eqn_1_a} gives
$0=E\circ(1-\alpha\beta)h=(1-\beta_0)\bigl(E(h)\bigr)$.
Next, applying $e_i$ to equation \eqref{eqn_1_a} gives
\begin{align}
e_i(g)-\alpha^{-1} e_{i+1}(g) &= e_i(h)-\alpha\beta e_{i-1}(h)
\notag \\
&=e_i(h)-e_{i-1}(h) + (1-\alpha\beta)e_{i-1}(h). \label{eqn_1_b}
\end{align}
Adding equations \eqref{eqn_1_b} for $i\le j$ gives
\begin{equation}\label{eqn_1_c}
q_j(g)-\alpha^{-1} q_{j+1}(g) = e_j(h)+(1-\alpha\beta)q_{j-1}(h).
\end{equation}
Applying $\alpha^{-j}$ to equation \eqref{eqn_1_c} gives
\begin{equation}\label{eqn_1_d}
\alpha^{-j}q_j(g)-\alpha^{-(j+1)}q_{j+1}(g) =
\alpha^{-j}e_j(h)+(1-\alpha\beta)\alpha^{-j}q_{j-1}(h).
\end{equation}
For $m<n$, we add equations \eqref{eqn_1_d} for $m\le j < n$ to get
\begin{equation}\label{eqn_1_e}
\alpha^{-m}q_m(g) - \alpha^{-n}q_n(g)
= \sum_{j=m}^{n-1}\alpha^{-j}e_j(h) +
(1-\alpha\beta)\sum_{j=m}^{n-1}\alpha^{-j}q_{j-1}(h).
\end{equation}
Choose $m$ and $n$ so that $g_i=h_i=0$ for $i \le m$ and $i \ge n$.
Then
$q_n(g)=g$, $q_m(g)=0$, and
$\sum_{j=m}^{n-1}\alpha^{-j}e_j(h)=\phi\bigl(E(h)\bigr)$.  Thus from
equation \eqref{eqn_1_e} we obtain
\begin{equation}\label{eqn_1_f}
g+\alpha^n\circ\phi\circ E(h)\in I.
\end{equation}
But for any $j$ we have
\begin{align*}
\alpha^j&\circ\phi\circ E(h)-\alpha^{j+1}\circ\phi\circ E(h)
=\alpha^j\circ\phi\circ E(h)-\alpha^{j+1}\circ\phi
\circ\beta_0\circ
E(h),
\text{ since } E(h) \in \ker(1-\beta_0), \\
&=\alpha^j\circ\phi\circ E(h)-\alpha^{j+1}\circ\beta
\circ\phi\circ E(h)
=(1-\alpha\beta)\circ\alpha^j\circ\phi\circ E(h)
\in I.
\end{align*}
From this and equation \eqref{eqn_1_f} we have that $g+\phi\circ
E(h)\in I$.
\end{proof}

\begin{Lemma}\label{lemma_2}
$1-\phi\circ E=(1-\alpha^{-1})\Bigl(-\sum_{j<0}\alpha^{-j}q_j +
\sum_{j\ge0}\alpha^{-j}(1-q_j)\Bigr)$.
\end{Lemma}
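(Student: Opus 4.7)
The plan is to verify the identity by direct computation, applying both sides to an arbitrary $f\in V$ and comparing the coefficient of each power $\alpha^{-j}$.  First I would observe that the sums on the right are well defined on $V$: since any $f\in V$ has finite support, $q_j(f)=0$ for $j$ sufficiently negative and $(1-q_j)(f)=0$ for $j$ sufficiently positive, so when applied to $f$ both sums terminate.  Write
\[
T = -\sum_{j<0}\alpha^{-j}q_j + \sum_{j\ge0}\alpha^{-j}(1-q_j),
\]
and expand $(1-\alpha^{-1})T = T - \alpha^{-1}T$ by reindexing (replacing $j$ by $j-1$) in the second term so that both sums are organized by the same power of $\alpha^{-1}$.

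The next step is to collect coefficients.  Using the telescoping identity $q_j - q_{j-1} = e_j$ (valid for all $j\in\IZ$), one checks that the coefficient of $\alpha^{-j}$ in $T-\alpha^{-1}T$ equals $-e_j$ for each $j\ne 0$, and $1-e_0$ for $j=0$ (the constant term $1$ appearing because the $j=0$ slot in $T$ carries $1-q_0$ while the corresponding slot in $\alpha^{-1}T$ carries $-q_{-1}$).  This yields
\[
(1-\alpha^{-1})T = 1 - \sum_{j\in\IZ}\alpha^{-j}e_j,
\]
where again the infinite sum is finite on any fixed $f$.

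It then remains to identify $\sum_{j\in\IZ}\alpha^{-j}e_j$ with $\phi\circ E$.  This uses the commutation relation $\alpha^j\circ e_i = e_{i+j}\circ\alpha^j$ from the definition preceding Lemma \ref{lemma_1}, which gives $\alpha^{-j}\circ e_j = e_0\circ\alpha^{-j}$.  Applied to $f$, this shows $(\alpha^{-j}e_j f)_0 = f_j$ and all other slices vanish, so $\sum_j \alpha^{-j}e_j(f)$ is the element concentrated at index $0$ with value $\sum_j f_j = E(f)$, i.e.\ $\phi\circ E(f)$.  Combining, $(1-\alpha^{-1})T = 1 - \phi\circ E$, as required.

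The computation is essentially routine; the only mild obstacle is managing the formally infinite sums, which is disposed of at the outset by invoking finite support of $f\in V$, after which everything reduces to a telescoping argument and an application of the $\alpha$-$e_i$ commutation relation.
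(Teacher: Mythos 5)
Your proof is correct and follows essentially the same route as the paper's: a direct telescoping computation of $(1-\alpha^{-1})T$ followed by the identification $\sum_j\alpha^{-j}e_j=\phi\circ E$ via the relation $\alpha^{-j}\circ e_j=e_0\circ\alpha^{-j}$. The only cosmetic difference is that you telescope the $q_j$'s directly using $q_j-q_{j-1}=e_j$, whereas the paper first expands $q_j=\sum_{i\le j}e_i$ and telescopes in the other index; both organize the same finite sums.
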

\begin{proof}
This is a straightforward computation.  We have
\[
\sum_{j<0}\alpha^{-j}q_j
=\sum_{j<0}\sum_{i\le j}\alpha^{-j}e_i\\
=\sum_{i<0}\sum_{j=i}^{-1}\alpha^{-j}e_i\\
=\sum_{i<0}\sum_{j=1}^{-i}\alpha^{j}e_i.
\]
Hence
\begin{equation}\label{lemma_2_a}
(1-\alpha^{-1})\sum_{j<0}\alpha^{-j}q_j
=\sum_{i<0}(\alpha^{-i} - 1)e_i.
\end{equation}
Similarly, we have
\[
\sum_{j\ge0}\alpha^{-j}(1-q_j)
=\sum_{j\ge0}\sum_{i>j}\alpha^{-j}e_i\\
=\sum_{i>0}\sum_{j=0}^{i-1}\alpha^{-j}e_i.
\]
Hence
\begin{equation}\label{lemma_2_b}
(1-\alpha^{-1})\sum_{j\ge0}\alpha^{-j}(1-q_j)
=\sum_{i>0}(1 - \alpha^{-i})e_i.
\end{equation}
Finally, combining equations \eqref{lemma_2_a} and \eqref{lemma_2_b},
we find that in the statement of the lemma, the right-hand side of
the equation equals
\[
\sum_{i\not=0}e_i-\sum_{i\not=0}\alpha^{-i}e_i
=\sum_i e_i-\sum_i \alpha^{-i}e_i \\
=1-\phi\circ E.\qedhere
\]
\end{proof}

Now we will compute the $K$-theory of $C^*(E)$.  Let
$\widetilde{V}=V/I$.  Since
$\alpha(I)=I$, $\alpha$ descends to an automorphism
$\widetilde\alpha$ of $\widetilde{V}$.  Under the isomorphism of
$K_0(C^*(E)\times\IT)$ with $\widetilde{V}$,
$\widetilde\alpha$ corresponds to the dual action of $\IZ$.  So by
the
Pimsner-Voiculescu exact sequence, we must identify the kernel and
cokernel of $1-\widetilde\alpha^{-1}$.  We will show that the kernel
and
cokernel of $1-\widetilde\alpha^{-1}$ are isomorphic to those of
$1-\beta_0$.  We let $\widetilde\phi$ denote the composition of
$\phi$ with the quotient map of $V$ onto $\widetilde{V}$.

\begin{Proposition}\label{proposition}
With the above notation, the kernel and cokernel of
$1-\widetilde\alpha^{-1}$ are isomorphic to those of $1-\beta_0$.
\end{Proposition}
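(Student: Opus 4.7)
The plan is to build explicit inverse isomorphisms on kernels and cokernels directly from Lemmas \ref{lemma_1} and \ref{lemma_2}, rather than appeal to the snake lemma.

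For the kernels, I would first verify that $\widetilde\phi$ sends $\ker(1-\beta_0)\subseteq W_0$ into $\ker(1-\widetilde\alpha^{-1})$. If $\beta_0 x=x$, then $\phi\circ\beta_0=\beta\circ\phi$ gives $(1-\alpha\beta)\phi(x)=(1-\alpha)\phi(x)$, hence $(1-\alpha^{-1})\phi(x)=-(1-\alpha\beta)(\alpha^{-1}\phi(x))\in I$. For the inverse direction I would define $\Psi\colon\ker(1-\widetilde\alpha^{-1})\to\ker(1-\beta_0)$ by $\Psi([g])=-E(h)$, where $h\in W$ is chosen so that $(1-\alpha^{-1})g=(1-\alpha\beta)h$. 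Lemma \ref{lemma_1} places $E(h)$ in $\ker(1-\beta_0)$ and asserts $g+\phi E(h)\in I$, which yields $\widetilde\phi\circ\Psi=\mathrm{id}$. Conversely, for $x\in\ker(1-\beta_0)$ the element $h=-\alpha^{-1}\phi(x)$ solves the defining equation (using $\beta\phi(x)=\phi(x)$), so $\Psi(\widetilde\phi(x))=E(\alpha^{-1}\phi(x))=x$.

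For the cokernels I would work with $\bar\phi\colon V_0\to\coker(1-\widetilde\alpha^{-1})=V/(I+(1-\alpha^{-1})V)$ given by $\bar\phi(x)=[\phi(x)]$. Lemma \ref{lemma_2} gives surjectivity at once: every $v\in V$ satisfies $v-\phi E(v)=(1-\alpha^{-1})T(v)\in(1-\alpha^{-1})V$, so $[v]=\bar\phi(E(v))$ in the cokernel. For the factorization through $\coker(1-\beta_0)=V_0/(1-\beta_0)W_0$, given $y\in W_0$ I would write $\phi((1-\beta_0)y)=(1-\beta)\phi(y)=(1-\alpha\beta)\phi(y)-(1-\alpha)\beta\phi(y)$, with the first summand in $I$ and the second in $(1-\alpha^{-1})V$ (since $1-\alpha=-\alpha(1-\alpha^{-1})$). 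For injectivity, if $\phi(x)=(1-\alpha\beta)h+(1-\alpha^{-1})g$, then applying $E$ together with $E\circ(1-\alpha\beta)=(1-\beta_0)\circ E$ on $W$ and $E\circ(1-\alpha^{-1})=0$ yields $x=(1-\beta_0)E(h)\in(1-\beta_0)W_0$.

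The main obstacle will be the careful setup for $\Psi$: one must verify that $1-\alpha\beta$ is injective on $W$, so that the preimage $h$ is genuinely unique, which I would check by examining the lowest nonzero $\IZ$-layer of a hypothetical kernel element; and one must verify that $E(h)$ does not depend on the representative of $[g]$, which follows from $E\circ(1-\alpha^{-1})=0$. Once $\Psi$ is in place, the remaining verifications are direct substitutions organized around the identities $E\circ\alpha=E$, $E\circ\beta=\beta_0\circ E$, and $\phi\circ\beta_0=\beta\circ\phi$.
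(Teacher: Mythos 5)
Your proposal is correct and follows essentially the same route as the paper: the intertwining identity $\widetilde\phi\circ(1-\beta_0)=(1-\widetilde\alpha^{-1})\circ\widetilde\phi$, Lemma \ref{lemma_1} for the kernel, Lemma \ref{lemma_2} for cokernel surjectivity, and an application of $E$ for cokernel injectivity. The only cosmetic difference is that you package the kernel isomorphism as an explicit two-sided inverse $\Psi([g])=-E(h)$ (whose well-definedness rests on the injectivity of $1-\alpha\beta$ on $W$, which your lowest-nonzero-layer argument correctly supplies), whereas the paper proves injectivity of the induced map directly by the same layer-by-layer analysis.
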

\begin{proof}
We have for $x\in W_0$,
\begin{align*}
(1-\widetilde\alpha^{-1})\circ\widetilde\phi(x)
&=(1-\alpha^{-1})\circ\phi(x) + I \\
\widetilde\phi\circ(1-\beta_0)(x)
&=(1-\beta)\circ\phi(x) + I. \\
\intertext{Since}
(1-\beta)\circ\phi(x) - (1-\alpha^{-1})\circ\phi(x)
&=(\alpha^{-1}-\beta)\circ\phi(x)
 =(1-\alpha\beta)\circ\alpha^{-1}\circ\phi(x)\in I, \\
\intertext{we find that (on $W_0$)}
\widetilde\phi\circ(1-\beta_0)
&=(1-\widetilde\alpha^{-1})\circ\widetilde\phi.
\end{align*}
Therefore $\widetilde\phi$ defines maps:
$\ker(1-\beta_0) \to \ker(1-\widetilde\alpha^{-1})$ and
$\coker(1-\beta_0) \to \coker(1-\widetilde\alpha^{-1})$.  We will
show that these maps are isomorphisms.

First we treat the map on kernels.  For surjectivity, let
$g+I\in\ker(1-\widetilde\alpha)$.  Then $(1-\alpha^{-1})g\in I$.
Thus there is $h\in W$ such that
$(1-\alpha^{-1})g=(1-\alpha\beta)h$.  By
Lemma \ref{lemma_1} we have that $E(h)\in\ker(1-\beta_0)$ and that
$g+I=-\phi\bigl(E(h)\bigr)+I=\widetilde\phi\bigl(E(-h)\bigr)
\in\widetilde\phi\bigl(\ker(1-\beta_0)\bigr)$.  For injectivity, let
$x\in\ker(1-\beta_0)$ and suppose that $\widetilde\phi(x)=0$.  Then
$\phi(x)=(1-\alpha\beta)h$ for some $h\in W$.  Applying $e_i$ to this
equation gives
\[
h_i-\beta_0(h_{i-1})=
\begin{cases}
x,&\text{if $i=0$} \\ 0,&\text{if $i\not=0$}.
\end{cases}
\]
Thus for any $i<0$, if $h_i\not=0$ then $h_{i-1}\not=0$.  Since $h$
is finitely non-zero, we must have $h_i=0$ for $i<0$.  Then $h_0=x$,
and for $i>0$ we have $h_i=\beta_0(h_{i-1})=\cdots=\beta_0^i(h_0)
=\beta_0^i(x)=x$.  Again since $h$ is finitely non-zero, we must have
$x=0$.

We now treat the map on cokernels.  For injectivity, let $x\in V_0$
be such that
$\widetilde\phi(x) \in (1-\widetilde\alpha^{-1})(V)$.  Thus
$\phi(x)\in(1-\alpha^{-1})(V) +I$.  Then there are $g\in V$ and $h\in
W$
such that $\phi(x)=(1-\alpha^{-1})g+(1-\alpha\beta)h$.  Applying $E$
gives
$x=0+(1-\beta_0)\bigl(E(h)\bigr)\in\image(1-\beta_0)$.  Finally, for
surjectivity, let $g\in V$.  Let $x=E(g)\in V_0$.
Then by Lemma \ref{lemma_2} we
know that $g\in\phi(x)+(1-\alpha^{-1})(V)$, so
$g+I\in\widetilde\phi(x)+(1-\widetilde\alpha^{-1})(\widetilde{V})$.
\end{proof}

\begin{Theorem}
Let $E$ be a directed graph.  Recall the map $\beta_0$ from
Definition \ref{betazero}.  Then $K_1(C^*(E))\cong \ker(1-\beta_0)$
and $K_0(C^*(E))\cong \coker(1-\beta_0)$.
\end{Theorem}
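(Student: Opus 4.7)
The plan is to assemble the machinery already proved in the paper: Takai duality identifies $C^*(E)\otimes\CK$ with $(C^*(E)\times_\gamma\IT)\times_{\widehat\gamma}\IZ$; the Pimsner--Voiculescu exact sequence for this $\IZ$-crossed product reduces the computation of $K_*(C^*(E))$ to the kernel and cokernel of $1-\widehat\gamma_*^{-1}$ on $K_0(C^*(E)\times_\gamma\IT)$; and Proposition \ref{proposition}, together with the isomorphism $\Phi:V/I\cong K_0(C^*(E)\times\IT)$, converts these into the kernel and cokernel of $1-\beta_0$.

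First, I would set $A := C^*(E)\times_\gamma\IT$. Because $A$ is AF (as recalled in Section~\ref{preliminaries}), one has $K_1(A)=0$, so the six-term Pimsner--Voiculescu sequence for the dual $\IZ$-action $\widehat\gamma$ collapses to the short exact sequence
\[
0 \to K_1(A\times_{\widehat\gamma}\IZ) \to K_0(A) \xrightarrow{1-\widehat\gamma_*^{-1}} K_0(A) \to K_0(A\times_{\widehat\gamma}\IZ) \to 0.
\]
Combining this with the stated Takai-duality identification $C^*(E)\otimes\CK \cong A\times_{\widehat\gamma}\IZ$ and with stability of $K$-theory gives $K_1(C^*(E))\cong \ker(1-\widehat\gamma_*^{-1})$ and $K_0(C^*(E))\cong \coker(1-\widehat\gamma_*^{-1})$.

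Second, I would verify compatibility of the algebraic and analytic pictures: under the isomorphism $\Phi:\widetilde V \to K_0(A)$ of the previous proposition, the induced endomorphism $\widehat\gamma_*$ corresponds to $\widetilde\alpha$. This is immediate from the formula $\widehat\gamma(\zeta_n s_x)=\zeta_{n+1}s_x$ together with $\Phi(\delta_{x,n})=[\zeta_n s_x]$ and the defining identity $\alpha(\delta_{x,n})=\delta_{x,n+1}$, so that $\Phi\circ\alpha=\widehat\gamma_*\circ\Phi$, and this descends to the quotient $\widetilde V$ since $\alpha(I)=I$. Consequently, $\ker(1-\widehat\gamma_*^{-1})$ and $\coker(1-\widehat\gamma_*^{-1})$ are carried, respectively, to $\ker(1-\widetilde\alpha^{-1})$ and $\coker(1-\widetilde\alpha^{-1})$.

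Finally, Proposition \ref{proposition} supplies the last identifications $\ker(1-\widetilde\alpha^{-1})\cong\ker(1-\beta_0)$ and $\coker(1-\widetilde\alpha^{-1})\cong\coker(1-\beta_0)$, yielding both assertions of the theorem. There is essentially no obstacle at this stage: all substantive work has been done in Section~3 (the computation of $K_0(A)$) and in Lemmas \ref{lemma_1}--\ref{lemma_2} and Proposition \ref{proposition} (the passage from $\widetilde\alpha$ to $\beta_0$). The only mild bookkeeping point is the compatibility $\Phi\circ\alpha = \widehat\gamma_*\circ\Phi$, which is a direct calculation on the generators $\delta_{x,n}$.
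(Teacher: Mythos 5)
Your proposal is correct and follows essentially the same route as the paper: the paper's proof of this theorem is exactly the combination of the Takai-duality/Pimsner--Voiculescu reduction sketched in the remarks preceding Proposition \ref{proposition} (using that the AF core has trivial $K_1$ and that $\widehat\gamma_*$ corresponds to $\widetilde\alpha$ under $\Phi$) with Proposition \ref{proposition} itself. The only difference is that you make explicit the bookkeeping check $\Phi\circ\alpha=\widehat\gamma_*\circ\Phi$, which the paper asserts without computation.
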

\begin{proof}
This follows from Proposition \ref{proposition}, and the remarks
before it.
\end{proof}

\begin{Remark}
\label{computing}
We remark that the $K$-groups of $C^*(E)$ may be described as follows, using the alternate description of Definition \ref{betazero}:
\begin{align*}
K_1(C^*(E)) &= \bigl\{ f \in C_c(S,\IZ) :
f(x) = \sum_{e \in E^1 x} f\bigl(o(e)\bigr) \bigr\} \\
K_0(C^*(E)) &= C_c(E^0,\IZ) /
\text{span}\,\bigl\{ \delta_x - \sum_{e \in xE^1} \delta_{t(e)} : x \in E^0 \bigr\}
\end{align*}
\end{Remark}
We mention that when the graph is described by pictures rather than by a matrix, the formulas given Remark \ref{computing} (also in \cite{kat}) can be easier to apply than those giving the $K$-groups as the kernel and the cokernel of a matrix.

\begin{Example}
Consider the following graph $E$:

\[
\xymatrix{
 {\bf \cdots}
 &
 -3
   \ar@{-}@(ur,ul)[]|-{\object@{<}}
   \ar@{-}[r]|-{\object@{>}}
  &
 -2
   \ar@{-}@(ur,ul)[]|-{\object@{<}}
   \ar@{-}[r]|-{\object@{>}}
  &
 -1
   \ar@{-}@(ur,ul)[]|-{\object@{<}}
   \ar@{-}@/^/[dr]|-{\object@{>}}
  &&
 1
   \ar@{-}@(ur,ul)[]|-{\object@{<}}
   \ar@{-}@/_/[dl]|-{\object@{>}}
  &
 2
   \ar@{-}@(ur,ul)[]|-{\object@{<}}
   \ar@{-}[l]|-{\object@{>}}
  &
 3
   \ar@{-}@(ur,ul)[]|-{\object@{<}}
   \ar@{-}[l]|-{\object@{>}}
  &
    {\bf \cdots}
\\
 &&&&
 0
   \ar@{-}@/_/[ur]|-{\object@{>}}
   \ar@{-}@/_/[urr]|-{\object@{>}}
   \ar@{-}@/_/[urrr]|-{\object@{>}}
   \ar@{-}@/^/[ul]|-{\object@{>}}
   \ar@{-}@/^/[ull]|-{\object@{>}}
   \ar@{-}@/^/[ulll]|-{\object@{>}}
}
\]

Since the graph is transitive and has loops, its $C^*$-algebra is a (UCT) Kirchberg algebra (\cite{functorialapproach}).  Since there are infinitely many vertices, $C^*(E)$ is non-unital, and hence is stable, by a theorem of Zhang (\cite{zha}).  We use the Remark \ref{computing} to compute the $K$-theory of $C^*(E)$.

First note that $S = \IZ \setminus \{0\}$.  Now let $f \in K_1(C^*(E))$.  Thus $f \in C_c(S,\IZ) \subseteq C_c(E^0,\IZ)$.  In particular, $f(0)=0$.  For $n \ge 1$, we have $f(n) = f(n) + f(n+1) + f(0)$, and hence $f(n+1)=0$.  Similarly, $f(n-1)=0$ for $n \le -1$.  Finally $0 = f(0) = f(-1) + f(1)$, so that $f(-1) = -f(1)$.  We see that $K_1(C^*(E)) = \IZ(\delta_1 - \delta_{-1}) \cong \IZ$.

Now,  let $[f]$ denote the class in $K_0$ of an element $f \in C_c(E^0,\IZ)$.  For $n \ge 1$ we have
$[\delta_n] = [\delta_n] + [\delta_{n-1}]$, and hence $[\delta_{n-1}] = 0$.  Similarly, $[\delta_{n+1}] = 0$ for all $n \le -1$.  Thus $[\delta_n] = 0$ for all $n \in \IZ$.  It follows that $K_0(C^*(E)) = 0$.

The algebra $C^*(E)$ is the stable form of the Kirchberg algebra denoted $\CP_\infty$ by Blackadar (\cite{bla2}).
\end{Example}

Additional interesting examples appear in
\cite{kirchautos}.  In the examples computed there, the graphs are more easily presented (and understood) by diagrams rather than by matrices.

\end{document}